\def\eqd{\buildrel \rm d \over =}
\newcommand{\Var}{\mathrm{Var}}
\DeclareRobustCommand{\prob}[1][P]{\ensuremath {\mathbb{#1}}}
\DeclareRobustCommand{\EX}[2][{\mathbb{E}}]{\ensuremath {#1}\left[ {#2} \right]}
\newtheorem{theorem}{Theorem}
\newtheorem{lemma}{Lemma}
\newtheorem{corollary}{Corollary}
\theoremstyle{definition}
\newtheorem{remark}{Remark}
\title{On a Tail Bound for Root-Finding in Randomly Growing Trees}
\author{Sam Justice\footnote{Department of Statistics and Actuarial Science, University of Iowa; contact email: samuel-justice@uiowa.edu} \ and N. D. Shyamalkumar\footnotemark[1]}
\date{\today}
\begin{document}

\maketitle

\begin{abstract}
We re-examine a lower-tail upper bound for the random variable
$$X=\prod_{i=1}^{\infty}\min\left\{\sum_{k=1}^iE_k,1\right\},$$
where $E_1,E_2,\ldots\stackrel{iid}\sim\text{Exp}(1)$. This bound has found use in root-finding and seed-finding algorithms for randomly growing trees, and was initially proved as a lemma in the context of the uniform attachment tree model. We first show that $X$ has a useful representation as a compound product of uniform random variables that allows us to determine its moments and refine the existing nonasymptotic bound. Next we demonstrate that the lower-tail probability for $X$ can equivalently be written as a probability involving two independent Poisson random variables, an equivalence that yields a novel general result regarding indpendent Poissons and that also enables us to obtain tight asymptotic bounds on the tail probability of interest.
\end{abstract}

\section{Introduction}

Concentration inequalities, which provide bounds on the tail probabilities of random variables, represent a vital and highly popular subject of mathematical research. These inequalities find applications in a variety of fields, including geometry, statistics, empirical process theory, machine learning, and randomized algorithms, to name a few. In this brief note, we reconsider an inequality that has been used in a particular area of the theory of randomly growing trees. A randomly growing tree consists of a sequence of inductively defined trees $\{T_n\}_{n\in\mathbb{N}}$, where $T_n$ is formed from $T_{n-1}$ by introducing a new vertex and attaching it to an existing vertex of $T_{n-1}$ according to some probabilistic rule. The inequality was first proved and utilized in \cite{bubeck2017} in the context of root-finding algorithms for the uniform attachment tree model, and it was subsequently used again in \cite{reddad2019} to prove a result regarding seed-finding in this same model. A variation on the inequality was also proved in \cite{khim2017} to aid in the analysis of root-finding for diffusions over regular trees. Note that the term root-finding algorithm refers to any method that identifies (in the limit) the first vertex (``root'') of a randomly growing tree with some specified probability. Similarly, a seed-finding algorithm seeks to partially or fully recover the first several vertices (``seed'') of a randomly growing tree with high probability. The tail bound appears as Lemma 2 in \cite{bubeck2017} and is stated as follows:

\vspace{2mm}

\noindent\textbf{Lemma 2 from \cite{bubeck2017}}: Let $E_1,E_2,\ldots\stackrel{iid}\sim\text{Exp}(1)$ and let
\begin{equation}\label{X1}
X=\prod_{i=1}^{\infty}\min\left\{\sum_{k=1}^iE_k,1\right\}.
\end{equation}
Then for any $t>0$, we have that
\begin{equation}\label{X2}
\prob(X \leq t)\leq6t^{1/4}.
\end{equation}

\vspace{2mm}

We note that \eqref{X2} is intended to be used for small $t$, as it gives a nontrivial bound only for $t<(1/6)^4\approx0.000772$. Lemma 12 of \cite{khim2017} proved that the same tail bound holds for
\begin{equation}\label{XB}
X=\prod_{i=1}^{\infty}\min\left\{-\sum_{k=1}^i\log(B_k),1\right\},
\end{equation}
where $B_1,B_2,\ldots\stackrel{iid}\sim\text{Beta}(\beta,1)$ for $\beta\in(0,1]$. Since $B_1 \eqd U^{1/\beta}$ for $U \sim \text{U}(0,1)$, we have using the uniform-exponential relationship that $-\log(B_1)\sim\text{Exp}(\beta)$. Thus the case $\beta=1$ corresponds to the definition of $X$ in \eqref{X1}.

The random variable $X$ arises in the context of a particular method for finding the root/seed of a randomly growing tree. Specifically, letting $V(T)$ denote the vertex set of a tree $T$ and $(T,v)_{u\downarrow}$ denote the subtree starting at $u$ in the rooted tree $(T,v)$, consider the function $\varphi_T:V(T)\to\mathbb{N}$ defined by
\begin{equation}\label{phi}
\varphi_T(v)=\prod_{u \in V(T)\setminus\{v\}}|(T,v)_{u\downarrow}|.
\end{equation}

Thus $\varphi_T(v)$, which is referred to in \cite{khim2017} as the subtree product estimator, is the product of all the sizes of the subtrees of the rooted tree $(T,v)$. It can be thought of as a measure of vertex centrality in $T$, with smaller values of $\varphi_T$ corresponding to more central vertices. Given the product form of $\varphi_T$, it is perhaps unsurprising that $X$ comes into play in proofs involving the estimator. Interestingly, the vertex minimizing the function $\varphi_T$ turns out to be the maximum likelihood estimate for the root of a diffusion over a regular tree, and $\varphi_T$ can also be seen as a relaxation of the likelihood functions for the root and seed in the uniform attachment model. It has been proven for these models (see \cite{bubeck2017}, \cite{khim2017}, and \cite{reddad2019}) that picking the $K$ vertices with the smallest values of $\varphi_T$ produces a successful root/seed-finding algorithm for $K$ large enough. The variable $X$ appears in an intermediate step of these proofs that involves upper-bounding the probability that a vertex $v \in V(T)$ becomes more central than the root vertex $r$ in the limit of a sequence of randomly growing trees $\{T_n\}_{n\in\mathbb{N}}$:
$$\limsup_{n\to\infty}\prob(\varphi_{T_n}(v)\leq\varphi_{T_n}(r)).$$

In this paper, we consider a general setting that encompasses both of the preceding definitions of $X$. In particular, we define $X$ as in \eqref{X1}, but now we let $E_1,E_2,\ldots\stackrel{iid}\sim\text{Exp}(\lambda)$ for an arbitrary $\lambda>0$. We carry out a careful examination of $X$ en route to obtaining both nonasymptotic and asymptotic bounds for $\prob(X \leq t)$. To begin, we demonstrate that $X$ has a useful representation as a compound (Poisson) product of iid standard uniform random variables. This representation allows us to easily calculate the moments of $X$, and it also leads to a quick refinement of the $t^{1/4}$ rate shown in \eqref{X2}. After this, we prove that $\prob(X \leq t)$ can be equivalently expressed as a probability involving two independent Poisson random variables. This equivalence immediately yields a general result regarding independent Poissons, and we also use it to obtain tight asymptotic bounds on $\prob(X \leq t)$.

\section{A Useful Representation and Some Consequences}

We begin by proving that $X$ can be written as a Poisson product of iid standard uniform random variables. Our proof uses some fundamental results from the theory of Poisson processes. In particular, we recall that for a Poisson process $\{N(t):t\in[0,\infty)\}$ with rate $\lambda$, the number of arrivals in any time interval of length $t$ has a $\text{Pois}(\lambda t)$ distribution. Also, conditional on the number of arrivals by time $t$, the arrival times of $N(t)$ have the same joint distribution as the order statistics from a $\text{U}(0,t)$ distribution. See, for example, \cite{ross1996} for these and other basic details on Poisson processes.

\begin{lemma}\label{lemma1}
For $\lambda>0$, let $E_1,E_2,\ldots\stackrel{iid}\sim\text{Exp}(\lambda)$ and let
\begin{equation}\label{X3}
X=\prod_{i=1}^{\infty}\min\left\{\sum_{k=1}^iE_k,1\right\}.
\end{equation}
Then we have that
\begin{equation}\label{X4}
X\eqd\prod_{i=1}^NU_i,
\end{equation}
where $N\sim\text{Pois}(\lambda)$, $\{U_i\}_{i\in\mathbb{N}}$ is an iid sequence of $\text{U}(0,1)$ random variables independent of $N$, and the product is understood to equal $1$ if $N=0$.
\end{lemma}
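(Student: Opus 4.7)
The plan is to exploit the fact that the partial sums $S_i = \sum_{k=1}^i E_k$ are precisely the arrival times of a homogeneous Poisson process $\{N(t)\}_{t \geq 0}$ of rate $\lambda$. With this interpretation, the truncation at $1$ in the definition \eqref{X3} has an especially clean meaning: the number of arrivals by time $1$, which I will denote $N := N(1)$, has a $\text{Pois}(\lambda)$ distribution, and $N$ equals the number of indices $i$ for which $S_i \leq 1$.

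First, I would reduce the infinite product to a finite one. For every $i > N$, we have $S_i > 1$ and hence $\min\{S_i, 1\} = 1$, so those factors are harmless. For every $i \leq N$, we instead have $\min\{S_i, 1\} = S_i$. The infinite product therefore collapses almost surely to
\begin{equation*}
X = \prod_{i=1}^{N} S_i,
\end{equation*}
with the convention that an empty product (when $N=0$) equals $1$.

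Next, I would condition on $\{N = n\}$ for an arbitrary nonnegative integer $n$. By the standard order-statistics characterization of Poisson processes, conditional on $\{N=n\}$, the vector $(S_1, \ldots, S_n)$ has the same joint distribution as the order statistics $(U_{(1)}, \ldots, U_{(n)})$ of $n$ iid $\text{U}(0,1)$ random variables $U_1, \ldots, U_n$. Since the product $\prod_{i=1}^n x_i$ is a symmetric function of its arguments, we obtain
\begin{equation*}
\prod_{i=1}^{n} S_i \;\eqd\; \prod_{i=1}^{n} U_{(i)} \;=\; \prod_{i=1}^{n} U_i
\end{equation*}
conditionally on $\{N = n\}$. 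Removing the conditioning and using the independence of $N$ from the underlying uniforms yields the claimed representation \eqref{X4}.

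There is no real obstacle: the argument is essentially bookkeeping once the Poisson process identification is in place. The only small point to watch is the justification that the infinite product in \eqref{X3} truly reduces to the finite product $\prod_{i=1}^N S_i$ almost surely, which follows from the almost sure finiteness of $N$ and the fact that all trailing factors are exactly $1$ (so no convergence issues arise).
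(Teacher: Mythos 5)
Your proposal is correct and follows essentially the same route as the paper: identify the partial sums as arrival times of a rate-$\lambda$ Poisson process, let $N$ be the number of arrivals by time $1$, collapse the infinite product to the first $N$ factors, and then apply the conditional order-statistics characterization of Poisson arrival times to replace $(S_1,\ldots,S_n)$ by uniform order statistics. The paper's proof is just a slightly terser version of yours; there is no meaningful difference in approach.
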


\begin{proof}
Consider a Poisson process with rate $\lambda$ and arrival times given by the sequence
$$\left\{\sum_{k=1}^iE_k\right\}_{i\in\mathbb{N}}.$$

Moreover, let $N$ denote the number of arrivals by time $1$ for this Poisson process, so that $N\sim\text{Pois}(\lambda)$. We clearly have that
$$
X=
\begin{cases}
1 & \text{if} \ N=0,\\
\prod_{i=1}\limits^N\sum_{k=1}^iE_k & \text{if} \ N>0.
\end{cases}
$$

Given that $N=n$, we also see that
$$\left(E_1,E_1+E_2,\ldots,\sum_{k=1}^nE_k\right)\eqd\left(U_{(1)},\ldots,U_{(n)}\right),$$
where $U_{(1)},\ldots,U_{(n)}$ are the order statistics corresponding to a sample of size $n$ from a $\text{U}(0,1)$ distribution. It follows that
$$
X \eqd X^*=
\begin{cases}
1 & \text{if} \ N=0,\\
\prod\limits_{i=1}^NU_{(i)}=\prod\limits_{i=1}^NU_i & \text{if} \ N>0,
\end{cases}
$$
where $\{U_i\}_{i\in\mathbb{N}}$ is an iid sequence of $\text{U}(0,1)$ random variables independent of $N$.
\end{proof}

This compound product representation easily yields the moments of $X$.

\begin{corollary}\label{cor1}
For any $\beta>-1$, we have that $\EX{X^{\beta}}=e^{-\frac{\beta\lambda}{1+\beta}}$. In particular, this implies that $\EX{X}=e^{-\lambda/2}$ and $\Var(X)=e^{-2\lambda/3}-e^{-\lambda}$.
\end{corollary}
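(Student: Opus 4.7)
The plan is to use Lemma \ref{lemma1} to replace $X$ with the compound product $\prod_{i=1}^N U_i$, condition on $N$, and exploit the iid structure of the $U_i$'s to reduce the calculation to the probability generating function of a Poisson random variable.

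First I would write
\[
\EX{X^{\beta}}=\EX{\prod_{i=1}^N U_i^{\beta}}=\EX{\EX[{\mathbb{E}}]{\prod_{i=1}^N U_i^{\beta}\,\Big|\,N}}=\EX{\left(\EX{U_1^{\beta}}\right)^N},
\]
where the case $N=0$ contributes $1$ by the convention in the lemma. The hypothesis $\beta>-1$ is exactly what is needed to guarantee that $\EX{U_1^{\beta}}=\int_0^1 u^{\beta}\,du=1/(1+\beta)$ is finite, so this reduction is legitimate (for $\beta\ge 0$ by nonnegativity, and for $-1<\beta<0$ by Tonelli since $U_i^{\beta}\ge 0$).

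Next I would substitute this moment into the Poisson probability generating function. Since $N\sim\text{Pois}(\lambda)$, we have $\EX{s^N}=e^{\lambda(s-1)}$ for any $s>0$, so taking $s=1/(1+\beta)$ yields
\[
\EX{X^{\beta}}=\exp\!\left(\lambda\left(\frac{1}{1+\beta}-1\right)\right)=\exp\!\left(-\frac{\beta\lambda}{1+\beta}\right),
\]
which is the claimed formula.

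Finally I would specialize: setting $\beta=1$ gives $\EX{X}=e^{-\lambda/2}$, and setting $\beta=2$ gives $\EX{X^2}=e^{-2\lambda/3}$, from which $\Var(X)=\EX{X^2}-(\EX{X})^2=e^{-2\lambda/3}-e^{-\lambda}$. There is no real obstacle here; the only delicate point is verifying the range $\beta>-1$ ensures $U_1^{\beta}$ is integrable, and that concern disappears once that observation is made.
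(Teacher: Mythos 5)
Your proof is correct and takes essentially the same route the paper intends: the paper states Corollary~\ref{cor1} without proof as an immediate consequence of the compound-product representation in Lemma~\ref{lemma1}, and your conditioning on $N$ followed by evaluating the Poisson probability generating function at $s=1/(1+\beta)$ is exactly that computation, with the integrability remark for $-1<\beta<0$ a welcome bit of care.
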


The next result is a simple consequence of Corollary \ref{cor1}. It shows that we can obtain a rate of $t^{\alpha}$ for any $\alpha\in(0,1)$ for the lower tail of $X$, and it furthermore gives the optimal rate corresponding to such $\alpha$. Note that this is a refinement of the original result shown in \eqref{X2}, which gives the rate $t^{1/4}$.

\begin{theorem}\label{theorem1}
For any $\alpha\in(0,1)$, we have for all $t>0$ that
\begin{equation}\label{X5}
\prob(X \leq t) \leq e^{\frac{\alpha\lambda}{1-\alpha}}t^{\alpha}.
\end{equation}

In particular, by optimizing the bound with respect to $\alpha$, it follows that
\begin{equation}\label{X6}
\prob(X \leq t) \leq \exp\left\{-\left(\sqrt{-\log t}-\sqrt{\lambda}\right)_+^2\right\},
\end{equation}
where $x_+=\max\{x,0\}$.
\end{theorem}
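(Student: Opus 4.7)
The plan is to obtain inequality \eqref{X5} as a direct Markov/Chernoff-type bound driven by the negative moments of $X$, which are already in hand via Corollary \ref{cor1}, and then to derive \eqref{X6} by optimizing the resulting exponent in $\alpha$.

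First, for any $\alpha \in (0,1)$ and any $t>0$, I would observe that on the event $\{X \le t\}$ we have $X^{-\alpha} \ge t^{-\alpha}$, so by Markov's inequality
\begin{equation*}
\prob(X \le t) \;=\; \prob\!\left(X^{-\alpha} \ge t^{-\alpha}\right) \;\le\; t^{\alpha}\, \EX{X^{-\alpha}}.
\end{equation*}
Since $-\alpha \in (-1,0)$, Corollary \ref{cor1} applies with $\beta = -\alpha$ and gives $\EX{X^{-\alpha}} = \exp\{\alpha\lambda/(1-\alpha)\}$, which yields \eqref{X5}. Note that this step uses only that the negative moment formula is valid for $\beta \in (-1,0)$, which is exactly the range for which the integrand in $\EX{X^{\beta}}$ remains integrable.

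Next, to get \eqref{X6} I would minimize the exponent
\begin{equation*}
g(\alpha) \;=\; \frac{\alpha\lambda}{1-\alpha} + \alpha \log t
\end{equation*}
over $\alpha \in (0,1)$. Differentiating gives $g'(\alpha) = \lambda/(1-\alpha)^2 + \log t$. When $t < e^{-\lambda}$ (equivalently $-\log t > \lambda$), the unique critical point in $(0,1)$ is $\alpha^* = 1 - \sqrt{\lambda/(-\log t)}$; a quick second-derivative check confirms it is a minimum. Substituting back and simplifying (using $-\log t > 0$ and writing $s = -\log t$ for brevity) produces
\begin{equation*}
g(\alpha^*) \;=\; -\bigl(\sqrt{s} - \sqrt{\lambda}\bigr)^2 \;=\; -\bigl(\sqrt{-\log t} - \sqrt{\lambda}\bigr)^2,
\end{equation*}
which matches \eqref{X6} in this regime, since the positive-part operator is redundant when $-\log t > \lambda$.

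Finally, to cover the remaining range $t \ge e^{-\lambda}$, I would simply note that $X \le 1$ almost surely (each factor in the definition of $X$ is bounded above by $1$), so $\prob(X \le t) \le 1$. In this regime $\sqrt{-\log t} \le \sqrt{\lambda}$, so $(\sqrt{-\log t} - \sqrt{\lambda})_+ = 0$ and the claimed bound \eqref{X6} reduces to the trivial bound $1$; the statement thus holds for all $t > 0$ once we adopt the convention that $\sqrt{-\log t}$ is treated as $0$ (or omitted) when $t \ge 1$. There is no real obstacle here; the only step requiring any care is verifying that $\alpha^*$ falls in $(0,1)$ and handling the boundary regime via the trivial bound, which ensures the positive-part in \eqref{X6} is the correct expression across all $t$.
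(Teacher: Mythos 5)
Your proof is correct and follows essentially the same route as the paper: apply Markov's inequality to $X^{-\alpha}$, invoke Corollary~\ref{cor1} with $\beta=-\alpha$ to get the moment $e^{\alpha\lambda/(1-\alpha)}$, and optimize the exponent over $\alpha\in(0,1)$ to obtain the optimal $\alpha^*=(1-\sqrt{\lambda/(-\log t)})_+$. The only difference is that you spell out the boundary case $t\ge e^{-\lambda}$ (where $\alpha^*$ leaves $(0,1)$ and the bound degenerates to the trivial $1$), which the paper compresses into the positive-part notation.
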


\begin{proof}
Using Corollary \ref{cor1}, we have for $\alpha\in(0,1)$ and $t>0$ that
\begin{align*}
\prob(X \leq t)&=\prob(X^{-\alpha} \geq t^{-\alpha})\\
&\leq\EX{X^{-\alpha}}t^{\alpha}\qquad\hbox{(Markov's Inequality)}\\
&=e^{\frac{\alpha\lambda}{1-\alpha}}t^{\alpha}.
\end{align*}

Optimizing the preceding expression with respect to $\alpha$, we see that it is minimized at $(1-\sqrt{\lambda/(-\log t)})_+$, which yields \eqref{X6}.
\end{proof}

\begin{remark}\label{remark1}
We note that Theorem \ref{theorem1} provides a better constant than that given in \eqref{X2}. Also, we observe that since $1/X$ does not have a finite moment generating function in a neighborhood of zero, we resort to finding the best moment bound. In this connection it is worth pointing out that the best moment bound is in general always tighter than the Chernoff bound, see \cite{philips1995}. Another interesting aside is that $-\log X$ is a sub-exponential random variable.
\end{remark}

\section{Tight Asymptotic Bounds}

The following result further exploits the characterization of $X$ from Lemma \ref{lemma1} as a compound product of uniforms to show that $\prob(X \leq t)$ has a novel representation as a probability involving two independent Poisson random variables. While the result is intriguing in its own right, we will also see later that it provides the key to obtaining tight asymptotic bounds for the lower tail of $X$ that in particular demonstrate that the bound of Theorem \ref{theorem1} is at least asymptotically tight.

\begin{theorem}\label{lemma2}
For any $t\in(0,1)$, we have that
\begin{equation}\label{X7}
\prob(X \leq t)=\prob(N>N^*),
\end{equation}
where $N\sim\text{Pois}(\lambda)$ and $N^*\sim\text{Pois}(-\log t)$ are independent.
\end{theorem}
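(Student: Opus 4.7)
The plan is to apply Lemma \ref{lemma1} and recast the lower-tail event $\{X \leq t\}$ in terms of arrival times of an auxiliary rate-$1$ Poisson process. This is natural because once $X$ is written as a product of uniforms, $-\log X$ becomes a compound sum of iid exponentials, which is precisely the sort of object the theory of Poisson processes is built to describe.

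First I would pass to the log scale. Using Lemma \ref{lemma1}, write $X \eqd \prod_{i=1}^N U_i$ with $U_1,U_2,\ldots \sim \text{U}(0,1)$ iid and independent of $N \sim \text{Pois}(\lambda)$. Setting $E_i^*=-\log U_i$ yields iid $\text{Exp}(1)$ random variables independent of $N$, and the event $\{X \leq t\}$ becomes $\left\{\sum_{i=1}^N E_i^* \geq -\log t\right\}$ (with the empty-sum convention when $N=0$). Because $t \in (0,1)$ forces $-\log t > 0$, the $N=0$ case automatically fails on both sides, so attention reduces to $N \geq 1$.

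Next I would interpret the $E_i^*$'s as the interarrival times of a rate-$1$ Poisson process independent of $N$, with arrival times $T_i=\sum_{k=1}^i E_k^*$. Letting $N^*$ denote the count of arrivals by time $-\log t$, we have $N^* \sim \text{Pois}(-\log t)$, and since $N^*$ is a measurable function of the $E_i^*$'s alone, it is independent of $N$. The event $\left\{\sum_{i=1}^N E_i^* \geq -\log t\right\}$ is then the event that the $N$-th arrival occurs at or after time $-\log t$, which by the standard count--time duality for Poisson processes is the same as $\{N^* < N\}=\{N>N^*\}$. Converting the $\geq$ to a strict $>$ is free by continuity of the distribution of $T_N$ given $N \geq 1$.

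I do not anticipate a substantive obstacle; the main insight is simply recognizing the count--time duality. The only points requiring care are (i) aligning the $N=0$ boundary case on both sides, handled by $t<1$, and (ii) the strict-versus-nonstrict inequality, handled by continuity of $T_N$ given $N \geq 1$.
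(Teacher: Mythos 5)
Your proof is correct and follows essentially the same route as the paper: apply Lemma \ref{lemma1}, pass to the log scale so $-\log X$ becomes a compound sum of iid $\text{Exp}(1)$ variables, interpret those as interarrival times of a rate-$1$ Poisson process independent of $N$, and invoke the count--time duality to identify $\{T_N \geq -\log t\}$ with $\{N > N^*\}$. The only (immaterial) difference is that you explicitly note the null-set correction between $\geq$ and $>$, which the paper glosses over by writing the two events as equal.
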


\begin{proof}
Using Lemma \ref{lemma1}, we have for $t\in(0,1)$ that
\begin{align*}
\prob(X \leq t)&=\prob\left(\prod_{i=1}^NU_i \leq t, \ N>0\right)\\
&=\prob\left(-\sum_{i=1}^N\log U_i \geq -\log t, \ N>0\right).
\end{align*}

Now since $\{-\log U_i\}_{i\in\mathbb{N}}$ is an iid sequence of $\text{Exp}(1)$ random variables, we can associate with it a rate 1 Poisson process that is independent of $N$. Letting $N^*$ denote the number of arrivals by time $-\log t$ for this Poisson process, so that $N^*\sim\text{Pois}(-\log t)$, we see that
$$\left\{-\sum_{i=1}^n\log U_i \geq -\log t\right\}=\{n>N^*\}.$$

We can therefore conclude that
$$\prob(X \leq t)=\prob(N>N^*),$$
where $N$ and $N^*$ are independent.
\end{proof}

\begin{remark}\label{remark2}
In view of Theorem \ref{lemma2}, it is worth mentioning that the use of the union bound and concentration inequalities for the Poisson distribution fails to recover Theorem \ref{theorem1}. Nevertheless, Theorem \ref{lemma2} suggests that the bound of Theorem \ref{theorem1} can be generalized to one involving two independent Poisson random variables; this is done in the following corollary.
\end{remark}

\begin{corollary}\label{cor2}
Let $M_\mu$ and $M_\nu$ be two independent Poisson random variables with means $\mu$ and $\nu$, respectively. Then we have that
\begin{equation}\label{Pois}
\prob(M_\mu \geq M_\nu) \leq \exp\left\{-\left(\sqrt{\nu}-\sqrt{\mu}\right)_+^2\right\}.
\end{equation}
\end{corollary}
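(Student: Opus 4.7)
The plan is to apply a direct Chernoff argument to the integer-valued random variable $M_\mu - M_\nu$. First I would dispense with the trivial case: if $\mu \geq \nu$, then $(\sqrt{\nu} - \sqrt{\mu})_+ = 0$ and \eqref{Pois} reduces to $\prob(M_\mu \geq M_\nu) \leq 1$, which is automatic, so I may assume $\nu > \mu$ henceforth.

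For any $s > 0$, Markov's inequality will give
\[
\prob(M_\mu \geq M_\nu) = \prob\bigl(e^{s(M_\mu - M_\nu)} \geq 1\bigr) \leq \EX{e^{s(M_\mu - M_\nu)}} = \EX{e^{s M_\mu}}\,\EX{e^{-s M_\nu}},
\]
where the last step uses independence. Plugging in the Poisson moment generating function $\EX{e^{s M}} = \exp(m(e^s - 1))$ for $M \sim \text{Pois}(m)$ will bound this by $\exp\bigl(\mu(e^s - 1) + \nu(e^{-s} - 1)\bigr)$. I would then optimize over $s > 0$: the first-order condition $\mu e^s = \nu e^{-s}$ yields $e^s = \sqrt{\nu/\mu}$, which is admissible since $\nu > \mu$ makes it strictly greater than $1$. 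Substituting back gives an exponent of $2\sqrt{\mu\nu} - \mu - \nu = -(\sqrt{\nu} - \sqrt{\mu})^2$, establishing \eqref{Pois}.

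I expect essentially no obstacle here---this is a textbook Chernoff computation---so the interest of the corollary lies in its conceptual provenance: Theorem \ref{lemma2} identifies $\prob(X \leq t)$ with an independent-Poisson comparison, and Corollary \ref{cor2} is then the natural two-parameter analogue of Theorem \ref{theorem1}. Note that simply composing Theorems \ref{theorem1} and \ref{lemma2} (with $\lambda = \mu$ and $t = e^{-\nu}$) would only yield the strict-inequality version $\prob(M_\mu > M_\nu) \leq \exp\{-(\sqrt{\nu} - \sqrt{\mu})_+^2\}$, so the direct Chernoff argument above is what upgrades $>$ to $\geq$. One could also phrase the same calculation as a probability-generating-function bound on $r^{M_\mu - M_\nu}$ for $r > 1$, which more transparently echoes the moment-based derivation of Theorem \ref{theorem1}.
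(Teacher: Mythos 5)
Your proof is correct and takes essentially the same route as the paper: a Chernoff bound on $M_\mu - M_\nu$ using independence and the Poisson moment generating function, optimized over the exponential tilt parameter. The paper's proof is terser (it leaves the optimization implicit and omits the $\mu \geq \nu$ case, which the $(\cdot)_+$ handles automatically), but your explicit first-order condition and your remark about why a direct argument is needed to get $\geq$ rather than $>$ are both accurate.
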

\begin{proof}
 Let $\beta>0$. Using Markov's Inequality, we observe that
\begin{align*}
\prob(M_\mu \geq M_\nu)&=\prob(e^{\beta M_\mu} \geq e^{\beta M_\nu})\\
&\leq\EX{e^{\beta M_\mu}}\EX{e^{-\beta M_\nu}}\\
&=\exp\left\{\mu(e^{\beta}-1)+\nu(e^{-\beta}-1)\right\}.
\end{align*}
Minimizing the above bound with respect to $\beta$ yields \eqref{Pois}.
\end{proof}

We are now ready to state and prove our asymptotic result. The proof makes use of a couple simple sets of inequalities which are given in the appendix.

\begin{theorem}\label{theorem2}
We have that
\begin{equation}\label{X8}
\log\prob(X \leq t)=-\left(\sqrt{-\log(t)}-\sqrt{\lambda}\right)^2+O\left(\log(-\log t)\right).
\end{equation}
\end{theorem}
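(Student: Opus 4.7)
The plan is to apply Theorem~\ref{lemma2} to rewrite $\prob(X\le t)=\prob(N>N^*)$ with independent $N\sim\text{Pois}(\lambda)$ and $N^*\sim\text{Pois}(s)$, where $s:=-\log t$. Since the claim is asymptotic as $t\downarrow 0$, we may freely assume $s$ is large, and in particular $s\ge\lambda$. Rewritten this way the target equality becomes
$$\log\prob(N>N^*)=-\bigl(\sqrt{s}-\sqrt{\lambda}\bigr)^2+O(\log s).$$
The upper bound is already in hand: since $\{N>N^*\}\subseteq\{N\ge N^*\}$, Corollary~\ref{cor2} applied with $\mu=\lambda$, $\nu=s$ immediately gives $\log\prob(N>N^*)\le -(\sqrt{s}-\sqrt{\lambda})^2$.

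For the matching lower bound I would rely on the trivial pointwise inequality
$$\prob(N>N^*)\ge\prob(N=k)\,\prob(N^*=k-1)$$
for a well-chosen integer $k$. A saddle-point heuristic picks $k$: writing the log-probabilities via the Poisson rate function $I_\mu(x)=x\log(x/\mu)-x+\mu$, the sum $I_\lambda(k)+I_s(k)$ is minimized over positive real $k$ at $k=\sqrt{\lambda s}$, with optimum value precisely $(\sqrt{s}-\sqrt{\lambda})^2$. Accordingly I would take $k=\lceil\sqrt{\lambda s}\rceil$.

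With this choice, applying Stirling's formula (with error $O(\log k)$) to both $\log k!$ and $\log(k-1)!=\log k!-\log k$ and adding gives, after collecting terms,
$$\log\prob(N=k)+\log\prob(N^*=k-1)=-\lambda-s+2k+k\log\!\left(\frac{\lambda s}{k^2}\right)+O(\log s).$$
The delicate $\sqrt{\lambda s}(\log s-\log\lambda)$ contributions from the two factors carry opposite signs and cancel, which is what makes the argument work. Because $k^2=\lambda s+O(\sqrt{\lambda s})$, a one-line expansion shows that $k\log(\lambda s/k^2)=O(1)$ and $2k-2\sqrt{\lambda s}=O(1)$, so the right-hand side equals $-(\sqrt{s}-\sqrt{\lambda})^2+O(\log s)$, completing the lower bound. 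The main obstacle is purely bookkeeping: one must verify that the Stirling error terms for the two factorials, together with the rounding error from replacing $\sqrt{\lambda s}$ by $\lceil\sqrt{\lambda s}\rceil$, all sit inside $O(\log s)=O(\log(-\log t))$. Both tasks are elementary and are presumably the content of the appendix inequalities the authors refer to just before the theorem statement.
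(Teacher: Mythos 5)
Your proof is correct, but it takes a genuinely different route from the paper's. For the upper bound you simply invoke Corollary~\ref{cor2} (equivalently, one could cite Theorem~\ref{theorem1} directly), using $\{N>N^*\}\subseteq\{N\ge N^*\}$; the paper instead re-derives an upper bound from scratch by writing $\prob(N>N^*)=\sum_k \prob(N^*=k)\prob(N\ge k+1)$, bounding each tail $\prob(N\ge k+1)$ via Lemma~\ref{lemma3} and each factorial product via the Stirling-type inequalities of Lemma~\ref{lemma4}, and then recognizing the resulting series as a modified-Bessel-type sum dominated by $e^{2\sqrt{-\lambda\log t}}$; this forces the case split $\lambda<2$ versus $\lambda\ge2$ handled in Remark~\ref{remark3}, which your route avoids entirely. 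For the lower bound you use the single ``saddle-point'' term $\prob(N>N^*)\ge\prob(N=k)\prob(N^*=k-1)$ at $k=\lceil\sqrt{\lambda s}\rceil$ with $s=-\log t$, and a short Stirling expansion produces $-\lambda-s+2\sqrt{\lambda s}+O(\log s)=-(\sqrt s-\sqrt\lambda)^2+O(\log s)$; the paper again keeps the full sum, applies the lower halves of Lemmas~\ref{lemma3} and~\ref{lemma4}, and recovers the same $e^{2\sqrt{-\lambda\log t}}$ leading behaviour from the Bessel-type series. Your one-term argument is shorter and cleaner for establishing the $O(\log(-\log t))$ error; the paper's explicit series bounds carry a bit more information about the polynomial prefactor if one wanted to sharpen the error term. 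One small point to fix: the appendix inequalities the theorem statement alludes to (Lemmas~\ref{lemma3} and~\ref{lemma4}) are the engine of the paper's series argument, not of your saddle-point argument --- for your lower bound you only need raw Stirling bounds for $\log k!$ (the same ones quoted inside the proof of Lemma~\ref{lemma4}), not the lemmas themselves, so you should not lean on the appendix to close the bookkeeping.
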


\begin{proof}
By Theorem \ref{lemma2}, it suffices to consider $\prob(N>N^*)$, where $N\sim\text{Pois}(\lambda)$ and $N^*\sim\text{Pois}(-\log t)$ are independent. For a lower bound, we have that
\begin{align*}
\prob(N>N^*)&=\sum_{k=0}^{\infty}\prob(N^*=k)\prob(N>N^*|N^*=k)\\
&=\sum_{k=0}^{\infty}\frac{e^{-(-\log t)}(-\log t)^k}{k!}\prob(N \geq k+1)\\
&\geq t\sum_{k=0}^{\infty}\frac{(-\log t)^k}{k!}\frac{e^{-\lambda}\lambda^{k+1}}{(k+1)!}\hspace{70mm}\mbox{(Lemma \ref{lemma3})}\\
&=te^{-\lambda}\left(\sum_{k=1}^{\infty}\frac{(-\log t)^k\lambda^{k+1}}{k!(k+1)!}+\lambda\right)\\
&\geq te^{-\lambda}\left(\sum_{k=1}^{\infty}(-\log t)^k\lambda^{k+1}\frac{2^{2k}}{\sqrt{2\pi k}(2k+1)!}+\lambda\right)\hspace{30mm}\mbox{(Lemma \ref{lemma4})}\\
&\geq te^{-\lambda}\left(\frac{1}{\sqrt{2\pi}}\sum_{k=1}^{\infty}(-\log t)^k\lambda^{k+1}\frac{2^{2k}}{(2k+2)(2k+1)!}+\lambda\right)\\
&=te^{-\lambda}\left(-\frac{1}{4\sqrt{2\pi}\log t}\sum_{k=1}^{\infty}\frac{(2\sqrt{-\lambda\log t})^{2k+2}}{(2k+2)!}+\lambda\right)\\
&=te^{-\lambda}\left(-\frac{1}{4\sqrt{2\pi}\log t}\left(\frac{e^{2\sqrt{-\lambda\log t}}+e^{-2\sqrt{-\lambda\log t}}}{2}-1-\frac{(2\sqrt{-\lambda\log t})^2}{2}\right)+\lambda\right)\\
&\geq te^{-\lambda}\left(-\frac{1}{4\sqrt{2\pi}\log t}\left(\frac{e^{2\sqrt{-\lambda\log t}}}{2}-1+2\lambda\log t\right)+\lambda\right).
\end{align*}

We prove an analogous upper bound for $\lambda<2$ (see Remark \ref{remark3} following this proof for comments on how to extend it to the case $\lambda\geq2$). To this end, we observe that
\begin{align*}
\prob(N>N^*)&=\sum_{k=0}^{\infty}\frac{e^{-(-\log t)}(-\log t)^k}{k!}\prob(N \geq k+1)\\
&\leq t\sum_{k=0}^{\infty}\frac{(-\log t)^k}{k!}\frac{e^{-\lambda}\lambda^{k+1}}{(k+1)!}\frac{1}{1-\lambda/(k+2)}\qquad&&\mbox{(Lemma \ref{lemma3})}\\
&\leq\frac{te^{-\lambda}}{1-\lambda/2}\left(\sum_{k=1}^{\infty}\frac{(-\log t)^k\lambda^{k+1}}{k!(k+1)!}+\lambda\right)\\
&\leq\frac{te^{-\lambda}}{1-\lambda/2}\left(\sum_{k=1}^{\infty}(-\log t)^k\lambda^{k+1}\frac{(2\sqrt{2})2^{2k}}{\sqrt{2\pi k}(2k+1)!}+\lambda\right)\qquad&&\mbox{(Lemma \ref{lemma4})}\\
&\leq\frac{te^{-\lambda}}{1-\lambda/2}\left(\frac{1}{\sqrt{\pi}}\sum_{k=1}^{\infty}(-\log t)^k\lambda^{k+1}\frac{2^{2k+1}}{(2k+1)!}+\lambda\right)\\
&=\frac{te^{-\lambda}}{1-\lambda/2}\left(\sqrt{\frac{\lambda}{-\pi\log t}}\sum_{k=1}^{\infty}\frac{(2\sqrt{-\lambda\log t})^{2k+1}}{(2k+1)!}+\lambda\right)\\
&\leq\frac{te^{-\lambda}}{1-\lambda/2}\left(\sqrt{\frac{\lambda}{-\pi\log t}}e^{2\sqrt{-\lambda\log t}}+\lambda\right).
\end{align*}
\end{proof}

\begin{remark}\label{remark3}
Note that our use of the upper bound provided by Lemma \ref{lemma3} in the proof is only valid for $\lambda<2$, and it is slightly messier (though straightforward) to handle the case $\lambda\geq2$. For this case, we simply observe that
\begin{align*}
\prob(N>N^*)&=\sum_{k=0}^{\infty}\frac{e^{-(-\log t)}(-\log t)^k}{k!}\prob(N \geq k+1)\\
&=t\sum_{k=0}^{\lfloor\lambda\rfloor}\frac{(-\log t)^k}{k!}\prob(N \geq k+1)+t\sum_{k=\lfloor\lambda\rfloor+1}^{\infty}\frac{(-\log t)^k}{k!}\prob(N \geq k+1)\\
&\leq t\sum_{k=0}^{\lfloor\lambda\rfloor}\frac{(-\log t)^k}{k!}+t\sum_{k=\lfloor\lambda\rfloor+1}^{\infty}\frac{(-\log t)^k}{k!}\prob(N \geq k+1).
\end{align*}

The first term above is $O(t(-\log t)^{\lfloor\lambda\rfloor})$, while the second term can be handled exactly as in the proof of Theorem \ref{theorem2} (i.e., by applying the upper bound in Lemma \ref{lemma3}) since $k+2>\lambda$ for all $k\geq\lfloor\lambda\rfloor+1$. The result of the theorem remains unchanged.
\end{remark}

\bibliographystyle{Chicago}
\bibliography{references}

\section{Appendix}\label{appendix}

We borrow from folklore the following simple set of inequalities for the upper tail of a Poisson random variable.

\begin{lemma}\label{lemma3}
Let $Y\sim\text{Pois}(\mu)$. Then for all $\mu<n+1$, we have that
\begin{equation}\label{pois}
\frac{e^{-\mu}\mu^n}{n!}\leq\prob(Y \geq n)\leq\frac{e^{-\mu}\mu^n}{n!}\frac{1}{1-\mu/(n+1)}.
\end{equation}
\end{lemma}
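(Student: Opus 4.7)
The lower bound is immediate: since $\prob(Y \geq n)$ includes the term $\prob(Y=n)$, we trivially have $\prob(Y \geq n) \geq \prob(Y=n) = e^{-\mu}\mu^n/n!$. So the work is entirely in the upper bound.

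For the upper bound, my plan is to factor out the $k=n$ term from the tail sum and then compare the remaining series to a geometric series. Starting from the definition, I would write
\begin{align*}
\prob(Y \geq n) = \sum_{k=n}^{\infty}\frac{e^{-\mu}\mu^k}{k!} = \frac{e^{-\mu}\mu^n}{n!}\sum_{j=0}^{\infty}\frac{\mu^j\, n!}{(n+j)!}.
\end{align*}
The key observation is that $(n+j)!/n! = (n+1)(n+2)\cdots(n+j) \geq (n+1)^j$ for every $j \geq 0$, so each term satisfies $\mu^j n!/(n+j)! \leq (\mu/(n+1))^j$. Summing yields a geometric series whose ratio is $\mu/(n+1)<1$ precisely by the assumed condition $\mu<n+1$, and it sums to $1/(1-\mu/(n+1))$. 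This is exactly the claimed bound.

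There is no real obstacle here; the hypothesis $\mu<n+1$ is needed solely to guarantee convergence of the geometric majorant, and the bound is essentially sharp when $\mu$ is considerably less than $n+1$ since the first omitted term in the comparison (namely $j=1$) gives exactly $\mu/(n+1)$. The lemma is of the ``folklore'' variety and no cleverness beyond the above ratio-and-geometric-series trick is required.
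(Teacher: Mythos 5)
Your proof is correct and takes essentially the same route as the paper: the lower bound is the single term $\prob(Y=n)$, and the upper bound follows by factoring out $e^{-\mu}\mu^n/n!$ and dominating the remaining series by the geometric series with ratio $\mu/(n+1)$, using $(n+j)!/n!\ge(n+1)^j$. No differences worth noting.
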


\begin{proof}
The lower bound is trivial. For the upper bound, we observe for $\mu<n+1$ that
$$\prob(Y \geq n)=\sum_{k=n}^{\infty}\frac{e^{-\mu}\mu^k}{k!}\leq\frac{e^{-\mu}\mu^n}{n!}\sum_{k=0}^{\infty}\frac{\mu^k}{(n+1)^k}=\frac{e^{-\mu}\mu^n}{n!}\frac{1}{1-\mu/(n+1)}.$$
\end{proof}

The next lemma is an easy consequence of Stirling's approximation.

\begin{lemma}\label{lemma4}
For all $n\in\mathbb{N}$, we have that
\begin{equation}\label{stirling}
\frac{2^{2n}}{\sqrt{2\pi n}(2n+1)!}\leq\frac{1}{n!(n+1)!}\leq2\sqrt{2}\frac{2^{2n}}{\sqrt{2\pi n}(2n+1)!}.
\end{equation}
\end{lemma}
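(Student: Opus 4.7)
The plan is to reduce this two-sided inequality to a short calculation involving the central binomial coefficient and a standard form of Stirling's approximation. As a first step, I would rewrite the middle quantity using the identity
$$\frac{(2n+1)!}{n!\,(n+1)!}=\frac{2n+1}{n+1}\binom{2n}{n},$$
which reformulates \eqref{stirling} as the equivalent statement
$$1 \;\leq\; \sqrt{2\pi n}\cdot\frac{2n+1}{n+1}\cdot\frac{\binom{2n}{n}}{4^n} \;\leq\; 2\sqrt{2}.$$
This form is convenient because the middle quantity is a well-behaved function of $n$ with limit $2\sqrt{2}$.

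Next I would invoke the classical two-sided Stirling bound
$$\sqrt{2\pi n}\left(\frac{n}{e}\right)^n \;\leq\; n! \;\leq\; \sqrt{2\pi n}\left(\frac{n}{e}\right)^n e^{1/(12n)},$$
and apply it to $(2n)!$ and to each copy of $n!$ in $\binom{2n}{n}=(2n)!/(n!)^2$. A short computation then yields explicit two-sided bounds of the form $\binom{2n}{n}/4^n = (\pi n)^{-1/2}\,\epsilon_n$, where $\epsilon_n$ is sandwiched between $e^{-1/(6n)}$ and $e^{1/(24n)}$. Substituting into the reformulated inequality, matters reduce to showing that $\sqrt{2}\cdot(2n+1)/(n+1)$, multiplied by this Stirling error factor, lies in $[1,2\sqrt{2}]$ for every $n\in\mathbb{N}$.

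Since $(2n+1)/(n+1)$ is increasing on $\mathbb{N}$ with range $[3/2,2)$, the leading quantity $\sqrt{2}(2n+1)/(n+1)$ already lies comfortably inside $[3\sqrt{2}/2,2\sqrt{2})$, and the Stirling error factor tends to $1$ quickly enough to preserve both inequalities. I do not anticipate any substantive obstacle; the only care required is to check the tightest cases by hand, namely the lower bound at small $n$ (worst at $n=1$, easily verified directly) and the upper bound asymptotically, where one confirms that the monotone approach of $\sqrt{2}(2n+1)/(n+1)$ to $2\sqrt{2}$ dominates the vanishing Stirling correction, amounting to the elementary estimate $1/(n+1)\geq 1/(12n)$ for $n\geq 1$.
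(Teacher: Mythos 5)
Your proposal is correct, and it rests on the same core tool as the paper's proof, namely a two-sided Stirling bound with explicit error terms; the difference is purely in bookkeeping. The paper factors the target ratio as $\frac{n!(n+1)!}{(2n+1)!}=\frac{n+1}{2n+1}\cdot\frac{(n!)^2}{(2n)!}$ and immediately discards the prefactor $\frac{n+1}{2n+1}$ in favor of the crude two-sided bound $[\tfrac12,1]$, after which Stirling applied to $(n!)^2/(2n)!$ finishes things off. You instead keep the prefactor $\frac{2n+1}{n+1}$ exactly, apply Stirling to $\binom{2n}{n}/4^n$ (the reciprocal of the paper's central quantity), and only at the very end check that $\sqrt{2}\,\frac{2n+1}{n+1}\,\epsilon_n$ lies in $[1,2\sqrt{2}]$ — the lower bound is worst at $n=1$ (where the product is roughly $1.80$), and the upper bound reduces, via $\log(1-x)\le -x$ on the factor $1-\frac{1}{2(n+1)}$, to the inequality $\frac{1}{n+1}\ge\frac{1}{12n}$, which is equivalent to $11n\ge 1$. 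Your route is slightly sharper in its treatment of the rational prefactor (the paper's crude $[\tfrac12,1]$ bound is where the factor $2\sqrt{2}$ of slack originates, whereas in your version it emerges more transparently as $\sqrt{2}$ times the range $[3/2,2)$ of $\frac{2n+1}{n+1}$), at the modest cost of one extra elementary check at the end; the paper's version avoids that check by absorbing the slack up front. Both are complete, correct, and comparable in length.
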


\begin{proof}
Let $n\in\mathbb{N}$. We begin by observing that the inequalities
$$\frac{(n!)^2}{2(2n)!}\leq\frac{n!(n+1)!}{(2n+1)!}\leq\frac{(n!)^2}{(2n)!}$$
hold, which combined with Stirling's bounds (see, e.g., \cite{feller1968}),
$$e^{\frac{1}{12n+1}}\sqrt{2\pi}n^{n+1/2}e^{-n} \leq n! \leq e^{\frac{1}{12n}}\sqrt{2\pi}n^{n+1/2}e^{-n},$$
yield that
$$\frac{e^{\frac{2}{12n+1}-\frac{1}{24n}}\sqrt{2\pi n}}{2^{2n+3/2}}\leq\frac{n!(n+1)!}{(2n+1)!}\leq\frac{e^{\frac{1}{6n}-\frac{1}{24n+1}}\sqrt{2\pi n}}{2^{2n+1/2}}.$$
The preceding inequalities simplify to the inequalities
$$\frac{1}{2\sqrt{2}}\frac{\sqrt{2\pi n}}{2^{2n}}\leq\frac{n!(n+1)!}{(2n+1)!}\leq\frac{\sqrt{2\pi n}}{2^{2n}},$$
which are an easy rearrangement of those in \eqref{stirling}.
\end{proof}

\end{document}